\date{June 19, 2022}
\title{Root systems and hyperk\"ahler varieties}
\author{Valery Alexeev}
\newcommand\Sym{\operatorname{Sym}}
\begin{document}

\begin{abstract}
  We point out a connection between root systems and some of the known
  hyperk\"ahler varieties.
\end{abstract}

\maketitle 

Let $\Lambda$ be a root system with Weyl group $W$ and $G$ be a
group variety. Varieties of the form $(\Lambda\otimes G)/W$ and
$\Hom(\Lambda, G)/W$ frequently appear as strata in various moduli
spaces. For example they appear implicitly in
\cite{losev2000new-moduli-spaces} for $\Lambda=A_n$ and $G=\bC^*$ and
explicitly in \cite{alexeev17ade-surfaces, 
  alexeev2020compactifications-moduli} for $\Lambda=A_n, D_n, E_n$ and
$G=\bC^*$ or an elliptic curve. What happens if $G=A$ is an abelian
surface?

\begin{example}
  Let $\Lambda=A_n$ and $X=(A_n\otimes A)/W(A_n) = \Hom(A_n^*,
  A)/W(A_n)$. Since $A_n^* = \bZ^{n+1}/\diag\bZ$, 
  \begin{displaymath}
    \{(g_1,\dotsc, g_{n+1}) \in G^{n+1} \mid \sum g_i=0 \}  
  \end{displaymath}
  One has $W(A_n) = S_{n+1}$, thus
  $X = \ker \big( \Sym^{n+1}(A) \xrightarrow{\sum} A\big)$, which is
  birational to a generalized Kummer variety $K_n(A)$.
\end{example}

\begin{example}
  Let $\Lambda=B_n$ and $X=(B_n\otimes A)/W(B_n)$. One has $B_n=\bZ^n$
  and $W(B_n) = \bZ_2^n \rtimes S_n$ acts on it in a natural way. Then
  the quotient $(B_n\otimes A)/\bZ_2^n = K^n$, where $K=A /\bZ_2$ is
  the Kummer surface of $A$, and $X = \Sym^n(K)$ is birationally
  isomorphic to a particular HK variety of $\Hilb^n({\rm K3})$ type.
\end{example}

\begin{question}
  Are there root lattice constructions for the O'Grady OG6 and OG10
  varieties and, optimistically, perhaps for other smooth hyperkahler
  varieties? 
\end{question}

Let us generalize the above construction a little.

\begin{lemma}
  Let $W$ be a finite (not necessarily reflection) group and
  let $L\simeq\bZ^n$ be an irreducible integral 
  $W$-representation. Then the subspace of $W$-invariants in
  $\wedge^2(L\oplus L)$ is isomorphic to $\bZ$.
\end{lemma}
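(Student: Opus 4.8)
The plan is to pass to $\bQ$, turn the problem into a statement about invariant bilinear forms, and then descend back to $\bZ$. Set $V=L\otimes\bQ$, an irreducible $\bQ[W]$-module. Writing $M=\bigl(\wedge^2(L\oplus L)\bigr)^W$, I note that $M$ is a subgroup of the finitely generated free abelian group $\wedge^2(L\oplus L)$, hence free, and that forming $\wedge^2$, direct sums, and $W$-invariants all commute with $-\otimes\bQ$ (for a finite group the invariants are cut out by the averaging idempotent $\frac1{|W|}\sum_{w}w$, which is defined after inverting $|W|$). Thus $M\otimes\bQ\cong\bigl(\wedge^2(V\oplus V)\bigr)^W$, and it suffices to prove that this rational invariant space is one-dimensional; then $\operatorname{rank}M=1$ and $M\cong\bZ$.

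To compute the rational invariants I would use the standard decomposition of the second exterior power of a sum (with $W$ acting diagonally),
\begin{displaymath}
  \wedge^2(V\oplus V)\;\cong\;\wedge^2 V\;\oplus\;(V\otimes V)\;\oplus\;\wedge^2 V,
\end{displaymath}
together with $V\otimes V=\Sym^2 V\oplus\wedge^2 V$, which upon taking invariants gives
\begin{displaymath}
  \bigl(\wedge^2(V\oplus V)\bigr)^W\;\cong\;3\,(\wedge^2 V)^W\;\oplus\;(\Sym^2 V)^W.
\end{displaymath}
So everything reduces to identifying the $W$-invariant alternating and symmetric forms on $V$.

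The two decisive inputs are that $W$ is finite and that $V$ is irreducible. Averaging any inner product gives a $W$-invariant positive-definite symmetric form $\beta\in(\Sym^2 V^*)^W$, and $\beta$ furnishes a $W$-isomorphism $V\cong V^*$; in particular $(\Sym^2 V)^W\neq 0$. Under this self-duality the full space of invariant bilinear forms is $(V^*\otimes V^*)^W\cong\Hom_W(V,V^*)\cong\operatorname{End}_W(V)$, which by Schur's lemma is a division algebra over $\bQ$. When this algebra is $\bQ$, the invariant form is unique up to scalar, so the nonzero symmetric $\beta$ forces $(\Sym^2 V)^W=\bQ\,\beta$ and $(\wedge^2 V)^W=0$. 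Substituting into the display above leaves a one-dimensional space, completing the rational count.

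I expect the genuine obstacle to be precisely the claim that $\operatorname{End}_W(V)=\bQ$. This holds exactly when $V$ is \emph{absolutely} irreducible, and it is automatic for the reflection representations of the Weyl groups appearing in the Examples; but for a merely $\bQ$-irreducible $V$ the endomorphism algebra can be larger (for instance $\bQ(\zeta_5)$ on the degree-$4$ representation of $\bZ/5$), and then the invariants pick up extra rank, so the hypothesis must be read in this stronger sense. A secondary point to pin down is why the surviving invariant form is symmetric rather than alternating: this is the Frobenius--Schur dichotomy, and positive-definiteness of $\beta$ places $V$ on the orthogonal side, excluding the symplectic alternative that would instead make $(\wedge^2 V)^W$ nonzero.
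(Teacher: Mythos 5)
Your argument is, in its positive part, the same as the paper's: pass to $V=L\otimes\bQ$, use the decomposition $\wedge^2(V\oplus V)\cong(\wedge^2 V)^{\oplus 3}\oplus\Sym^2 V$, produce a $W$-invariant positive definite symmetric form by averaging, identify invariant bilinear forms with $\Hom_W(V,V^*)\cong\operatorname{End}_W(V)$, and apply Schur's lemma; your reduction of the integral statement to the rational rank count is exactly the (implicit) descent step in the paper.

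However, the caveat you flag is not a defect of your write-up; it is a genuine gap in the lemma and in the paper's own proof. Over $\bQ$, Schur's lemma says only that $\operatorname{End}_W(V)$ is a division algebra, so the paper's sentence ``by Schur's lemma the bilinear form is unique up to a scalar'' is valid precisely when $\operatorname{End}_W(V)=\bQ$, i.e.\ when $V$ is absolutely irreducible. Your example shows the statement as written is false: for $W=\bZ/5$ acting on $L=\bZ[\zeta_5]\simeq\bZ^4$, the representation is $\bQ$-irreducible, but $V\otimes\bC$ is the sum of the four nontrivial characters $\chi,\chi^2,\chi^3,\chi^4$, so the trivial character occurs twice in $\wedge^2(V\otimes\bC)$ (from $\chi\wedge\chi^4$ and $\chi^2\wedge\chi^3$) and twice in $\Sym^2(V\otimes\bC)$; hence $\bigl(\wedge^2(L\oplus L)\bigr)^W$ has rank $3\cdot 2+2=8$, not $1$. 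So the hypothesis must indeed be strengthened to absolute irreducibility (equivalently $\operatorname{End}_W(L\otimes\bQ)=\bQ$), as you say. This leaves the paper's applications intact: the reflection representation of a Weyl group, and any finite-index invariant sublattice of a root lattice, is absolutely irreducible. One simplification to your last point: once the space of invariant forms is known to be one-dimensional, no appeal to Frobenius--Schur theory or to positive definiteness is needed to exclude the alternating case --- the space contains the nonzero symmetric form $\beta$, so every invariant form is a multiple of $\beta$, and a nonzero form cannot be both symmetric and alternating in characteristic zero.
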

\begin{proof}
  Let $V=L\otimes\bQ$. One has
  \begin{displaymath}
    \wedge^2(V\oplus V)  = (\wedge^2 V)^{\oplus 3}
    \oplus \Sym^2(V),
  \end{displaymath}
  We claim that and $(\wedge^2 V)^W=0$ and
  $\left( \Sym^2 V\right)^W=\bQ$, from which the statement follows.
  Indeed, an $W$-invariant bilinear form is the same as an isomorphism
  of $W$-representations $V\to V^*$.  Since there exists a
  $W$-invariant symmetric bilinear form on $V$ (just take any positive
  definite symmetric form and average it), $V^*\simeq V$. By Schur's
  lemma the bilinear form is unique up to a scalar. Since it is
  symmetric, it lies in $\Sym^2V$ and there are none in $\wedge^2V$.
\end{proof}

\begin{theorem}
  Let $W$ and $L$ be as in the lemma, and let $A$ be an abelian surface.
  Define an abelian variety
  $Y = L\otimes A = \Hom(L^*, A)$ and  let $X = Y/W$. Then the map $Y\to X$
  is unramified outside of a subset of codimension $\ge2$ and the
  smooth locus $U$ of $X$ carries a nondegenerate holomorphic
  symplectic form unique up to a constant (so that $X$ is a singular
  IHS in an appropriate definition).
\end{theorem}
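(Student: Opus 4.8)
The plan is to construct a $W$-invariant holomorphic symplectic form on the abelian variety $Y$ and push it down to $U$, with the codimension statement falling out of the fact that $\dim A=2$.

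First I would pin down the ramification. Since $W$ acts on $Y=L\otimes A$ through its linear action on $L$, fixing the origin, the locus where $\pi\colon Y\to X$ fails to be \'etale is the non-free locus $\bigcup_{g}Y^g$, the union over $g\neq 1$ of the fixed subgroups $Y^g=\ker(g-1\colon Y\to Y)$. (Only $g$ acting nontrivially on $L$ matter, since the kernel of $W\to\operatorname{GL}(L)$ acts trivially on $Y$ and drops out of the quotient.) Setting $V=L\otimes\bC$ and $V_A=T_0A$, the tangent space at the origin is $T_0Y=V\otimes_{\bC}V_A$, so the tangent space to $Y^g$ is $V^g\otimes_{\bC}V_A$ and
\[
  \operatorname{codim}_Y Y^g \;=\; 2\,\operatorname{codim}_V V^g \;\ge\; 2 .
\]
This is exactly where ``$A$ is a surface'' enters: every fixed locus has even, hence $\ge 2$, codimension, so there are no ramification divisors and $\pi$ is unramified outside a closed subset of codimension $\ge 2$ (both upstairs and, being a finite image, downstairs).

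Next I would produce the form on $Y$. By translation invariance the holomorphic $2$-forms on the torus $Y$ are $\wedge^2(T_0Y)^*=\wedge^2(V^*\otimes V_A^*)$, with $W$ acting through $V^*$ and trivially on $V_A^*$. Using the decomposition
\[
  \wedge^2(V^*\otimes V_A^*)=\big(\wedge^2 V^*\otimes\Sym^2 V_A^*\big)\oplus\big(\Sym^2 V^*\otimes\wedge^2 V_A^*\big)
\]
and taking $W$-invariants, the Lemma (applied to $V^*\cong V$, after complexifying) gives $(\wedge^2V^*)^W=0$ and $(\Sym^2V^*)^W=\bC$. Hence the invariant $2$-forms are exactly $\wedge^2 V_A^*=H^0(A,\Omega^2_A)$, a one-dimensional space spanned by $\omega=b\otimes\omega_A$, where $b\in(\Sym^2V^*)^W$ is the invariant bilinear form of the Lemma and $\omega_A$ generates $H^0(A,\Omega^2_A)$. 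I would then check nondegeneracy of $\omega$: as $b$ is nondegenerate (up to scale it is positive definite) and $\omega_A$ is nondegenerate, and the tensor product of a nondegenerate symmetric form with a nondegenerate alternating form is a nondegenerate alternating form, $\omega$ is symplectic.

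Finally I would descend and extend. On the free locus $Y^\circ$ the map $\pi$ is \'etale, so $\omega$ descends to a holomorphic symplectic form $\bar\omega$ on $X^\circ=Y^\circ/W\subset U$; since $U\setminus X^\circ$ has codimension $\ge 2$ in the smooth variety $U$, Hartogs extension produces a holomorphic $2$-form on all of $U$, and the same extension argument shows $H^0(U,\Omega^2_U)\cong H^0(Y,\Omega^2_Y)^W$ is one-dimensional, yielding uniqueness up to a constant. The step needing the most care is that nondegeneracy survives across the fixed strata: writing $\dim X=2n$, I would note that $\bar\omega^{\wedge n}$ is a section of $K_U$ that is nowhere zero on the dense open $X^\circ$, so its zero locus is either empty or a divisor; being contained in the codimension $\ge 2$ set $U\setminus X^\circ$ it must be empty, so $\bar\omega$ is symplectic everywhere on $U$.
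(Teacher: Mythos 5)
Your proposal is correct and follows essentially the same route as the paper's proof: the same codimension-doubling argument for fixed loci (crucially using $\dim A=2$), the same key Lemma to show the space of $W$-invariant holomorphic $2$-forms is one-dimensional, and the same descent/extension between $Y$ and the smooth locus $U$; your decomposition $\wedge^2(V\otimes V_A)=\big(\wedge^2 V\otimes\Sym^2 V_A\big)\oplus\big(\Sym^2 V\otimes\wedge^2 V_A\big)$ is just the equivariant repackaging of the paper's $\wedge^2(V\oplus V)=(\wedge^2 V)^{\oplus 3}\oplus\Sym^2 V$. The one place you go beyond the paper is in explicitly verifying nondegeneracy — identifying the generator as $b\otimes\omega_A$ with $b$ the (definite, hence nondegenerate) invariant symmetric form, and ruling out zeros of $\bar\omega^{\wedge n}$ on $U$ after Hartogs extension via the divisor argument — a point the paper's proof asserts but leaves implicit.
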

\begin{proof}
  Let $y\in L\otimes A$ be a point with a nontrivial stabilizer
  $G\subset W$. The action of $G$ on the tangent space $T_{y, Y}$ is
  identified with a restriction of the action of $W$ on
  $T_{0,Y} = L\otimes T_{0,A}$. The latter action is free outside a
  subset of codimension $2$ because it splits into two actions on
  $L\otimes\bC$, both with nontrivial kernel.  This shows that
  $W$-action is free outside a subset of codimension $\ge2$.

  Any nondegenerate holomorphic symplectic form on $U$ extends to a
  $W$-invariant nondegenerate holomorphic symplectic form on $Y$ and
  vice versa. Now,
  \begin{displaymath}
    H^0(Y, \Omega^1) = L\otimes \bC^2 
    \text{ and }
    H^0(Y,\Omega^2) = \wedge^2(L\oplus L) \otimes \bC.
  \end{displaymath}
  By the above lemma, $\left( \wedge^2(L\oplus L) \right)^W = \bZ$, so
$H^0(U, \Omega^2) = \bC$.
\end{proof}

\begin{remark}
  The easiest case where the theorem applies is when $L$ is a finite
  index $W$-invariant sublattice of a root lattice $\Lambda$. In
  that case, the singularity of $X$ at the origin is
  $(\Lambda\otimes \bC^2)/W$.  By \cite{kuznetsov2007quiver-varieties,
    ginzburg2004poisson-deformations} this singularity has a
  symplectic resolution if and only if $\Lambda$ is the root lattice
  of types $A$, $B$ or $C$ (and not $D,E,F,G,H$).
  
  The lattices invariant under $W(A_n)$ and $W(B_n)=W(C_n)$ are easy
  to list: up to rescaling for $A_n$ they are $A_n\subset
  L\subset A_n^*$, and for $B_n,C_n$ they are $D_n$, $B_n$ and
  $D_n^*$. 
  It appears that they provide no other examples other than
  the two above which admit a symplectic resolution. 
\end{remark}

\begin{acknowledgments}
  I thank Nikon Kurnosov and Claire Voisin for useful conversations.
\end{acknowledgments}

\bibliographystyle{amsalpha}
\def\cprime{$'$}
\providecommand{\bysame}{\leavevmode\hbox to3em{\hrulefill}\thinspace}
\providecommand{\MR}{\relax\ifhmode\unskip\space\fi MR }
\providecommand{\MRhref}[2]{%
  \href{http://www.ams.org/mathscinet-getitem?mr=#1}{#2}
}
\providecommand{\href}[2]{#2}

\end{document}